\theoremstyle{plain}
\newtheorem{theorem}{Theorem}[section]
\newtheorem{lemma}[theorem]{Lemma}
\newtheorem{proposition}[theorem]{Proposition}
\newtheorem{corollary}[theorem]{Corollary}
\newtheorem{example}[theorem]{Example} 
\theoremstyle{definition}
\newtheorem{remark}[theorem]{Remark}
\newenvironment{proofof}[1]{\noindent{\it Proof of
#1.}}{\hfill$\square$\\\mbox{}}
\DeclareMathOperator{\Ind}{Ind}
\DeclareMathOperator{\F}{\mathbb{F}}
\begin{document}
\date{}
\title{Lower bounds on the Noether number}
\author{K. Cziszter
\thanks{Email: \texttt{cziszter.kalman@gmail.com}\\
Partially supported by  National Research, Development and Innovation Office, NKFIH   grants PD113138, ERC~HU~15 118286 and K115799.}
}
\author{M. Domokos 
\thanks{Email:  \texttt{domokos.matyas@renyi.mta.hu}\\
Supported by National Research, Development and Innovation Office,  NKFIH K 119934.}}
\affil{MTA Alfr\'ed R\'enyi Institute of Mathematics, Re\'altanoda utca 13-15, 1053 Budapest,  Hungary}
\maketitle
\begin{abstract} 
The best known method to give a lower bound for the Noether number of a given finite group is to use the fact that it is greater than or equal to the Noether number of any of the subgroups or factor groups. 
The results of the present paper show in particular that these inequalities are strict for proper subgroups or factor groups. This is established by studying the algebra of coinvariants 
of a representation induced from a representation of a subgroup.  
\end{abstract}

\noindent 2010 MSC: 13A50 (Primary) 

\noindent Keywords:  polynomial invariant, Noether number, induced representation, algebra of coinvariants


\def\K{\mathbb{F}}
\def\td{\mathrm{topdeg}}


\section{Introduction} \label{sec:intro} 

Throughout this paper $G$ is a finite group, $\F$ is a field whose characteristic does not divide the order of $G$. Given a finite dimensional $\F G$-module $W$ we write $S(W)$ for the symmetric tensor algebra of $W$. The linear action of $G$ on $W$ extends to an action via $\F$-algebra automorphisms of $S(W)$. 
We are interested in the subalgebra 
\[S(W)^G=\{f\in S(W)\mid g\cdot f=f\quad \forall g\in G\}\] 
of {\it $G$-invariants}.  The {\it Noether number} $\beta(G,W)$ which is the smallest number $d$ such that $S(W)^G$ is generated as an algebra by  its elements of degree at most $d$.
 A fundamental result in the invariant theory of finite groups is that for $\beta(G): =\sup\{\beta(G,W) \mid W\text{ is an }\F G\text{-module}\}$ we have the inequality 
(known as the {\it Noether bound}) 
\[\beta(G)\le |G|  \quad \text{(see \cite{noether}, \cite{fleischmann}, \cite{fogarty})}.\]  
Improvements of the Noether bound or exact values of the Noether number can be found in \cite{schmid}, \cite{domokos-hegedus}, 
\cite{sezer}, \cite{CzD:1}, \cite{CzD:2}, \cite{Cz1}, \cite{hegedus-pyber}, \cite{CzDG}, \cite{CzDSz}. 
Our starting point is the following observation of  B. Schmid \cite{schmid}:

\begin{lemma}\label{lemma:schmid} {\rm [Schmid]} Let $H$ be a subgroup of $G$ and let $V$  be an $\F H$-module. We have the inequality 
\begin{align}\label{eq:schmid1} 
\beta(G, \Ind_H^G V ) \ge \beta(H,V)
\end{align}
and consequently 
\begin{align}\label{eq:schmid2} 
\beta(G)\ge \beta(H). 
\end{align}
\end{lemma}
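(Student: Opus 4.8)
The plan is to reduce both displayed inequalities to the construction of a surjective homomorphism of graded $\F$-algebras
\[
\phi\colon S(\Ind_H^G V)^G \twoheadrightarrow S(V)^H .
\]
Granting this, \eqref{eq:schmid1} follows at once: if $S(\Ind_H^G V)^G$ is generated by its elements of degree at most $d$, then their $\phi$-images generate $S(V)^H$ in degree at most $d$, so $\beta(H,V)\le d$; taking $d=\beta(G,\Ind_H^G V)$ yields the claim. Then \eqref{eq:schmid2} is immediate, since $\beta(G)\ge \beta(G,\Ind_H^G V)\ge \beta(H,V)$ for every $\F H$-module $V$, and taking the supremum over $V$ gives $\beta(G)\ge\beta(H)$.

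To build $\phi$, write $W=\Ind_H^G V=\bigoplus_{i=1}^n g_i\otimes V$, where $g_1=1,g_2,\dots,g_n$ is a transversal of $G/H$. First I would record the structural facts about the $G$-action: each $g\in G$ permutes the summands, sending $g_i\otimes V$ into a single $g_{\sigma(i)}\otimes V$. In particular the first summand $1\otimes V$ is an $\F H$-submodule isomorphic to $V$, and its complement $\bigoplus_{i\ge 2}g_i\otimes V$ is $H$-stable (for $h\in H$ and $i\ge 2$ the coset $hg_iH$ is again $\ne H$). Hence the inclusion $\iota\colon V\hookrightarrow W$ and the projection $\pi\colon W\twoheadrightarrow V$ onto the first summand are both $\F H$-linear and satisfy $\pi\circ\iota=\mathrm{id}_V$. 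Applying the symmetric-algebra functor produces $\F H$-equivariant graded algebra maps $S(\iota)\colon S(V)\to S(W)$ and $S(\pi)\colon S(W)\to S(V)$ with $S(\pi)\circ S(\iota)=\mathrm{id}_{S(V)}$. Since $S(\pi)$ is $H$-equivariant and $S(W)^G\subseteq S(W)^H$, its restriction defines a graded algebra homomorphism $\phi:=S(\pi)|_{S(W)^G}\colon S(W)^G\to S(V)^H$.

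The heart of the argument, and the step I expect to require the most care, is the surjectivity of $\phi$. Here I would use the relative transfer
\[
\tau\colon S(W)^H\to S(W)^G,\qquad \tau(F)=\sum_{i=1}^n g_i\cdot F,
\]
which is well defined (independent of the transversal, as $F$ is $H$-invariant) and takes values in the $G$-invariants. Given a homogeneous $f\in S(V)^H$ of positive degree, set $\tilde f=S(\iota)(f)\in S(W)^H$ and $F=\tau(\tilde f)\in S(W)^G$. The key computation is that $g_i\cdot\tilde f$ is supported on the summand $g_i\otimes V$, so $S(\pi)$ annihilates it for every $i\ge 2$, while for $i=1$ it returns $(S(\pi)\circ S(\iota))(f)=f$; hence $\phi(F)=f$. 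As the scalars are trivially in the image, this proves $\phi$ surjective and completes the construction. The remaining points—the $\F H$-linearity of the maps above and the fact that $G$ merely permutes the summands $g_i\otimes V$—are routine bookkeeping.
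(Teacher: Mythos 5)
Your proposal is correct and takes essentially the same route as the paper: the paper reduces \eqref{eq:schmid1} to the surjectivity statement $\pi(S(W)^G)=S(V)^H$ (equation \eqref{eq:schmid0} in Section~\ref{sec:subgroup}) for exactly the coordinate projection you use, and your relative-transfer computation $\phi(\tau(S(\iota)(f)))=f$ is the standard verification of that surjectivity, which the paper leaves implicit by attribution to Schmid. No gaps to report.
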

This lower bound also has an obvious counterpart for  homomorphic images. 
Indeed, as any $\F(G/N)$-module $W$ can be interpreted as an $\F G$-module $W$ on which $N$ acts trivially, we have $\beta(G,W) = \beta(G/N, W)$ and consequently
\begin{align}\label{factor_triv}
\beta(G) \ge \beta(G/N).
\end{align}
The inequalities \eqref{eq:schmid2}, \eqref{factor_triv} and their improvements  are the main tools to produce lower bounds for 
$\beta(G)$.  In particular, $\beta(G)$ is not smaller than the maximal order of an element of $G$. 
The inequality \eqref{eq:schmid1} is sharp in the sense that it may happen  for 
some group $G$ and a proper subgroup $H\neq G$ that we have $\beta(G, \Ind_H^G V ) = \beta(H,V)$,  see Example~\ref{example:dihedral}.  
It was observed, however, in  \cite{CzDSz} that for the groups $G$ of order less than $32$ 
and for some other infinite classes of groups neither \eqref{eq:schmid2} nor \eqref{factor_triv} are sharp.  As a result of our inquiry we can now prove that this is a general phenomenon: 

\begin{theorem}\label{thm:SF}
For any proper subgroup $H \subsetneq G$  we have
\begin{align}\label{eq:3thm}
\beta(G)& >\beta(H)
\end{align} 
and
for any normal subgroup $N \triangleleft G$ we have
\begin{align}\label{eq:beta+beta} \beta(G) \ge \beta(N) + \beta(G/N) -1. 
\end{align}
\end{theorem}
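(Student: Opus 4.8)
My plan is to translate both inequalities into the language of the coinvariant algebra $S(W)_G:=S(W)/(S(W)^G_+)$, where $(S(W)^G_+)$ is the ideal of $S(W)$ generated by the invariants of positive degree; let $\td(W,G)$ denote its top nonzero degree. The whole argument rests on relating this top degree to the Noether number in both directions. First I would prove that for every $\F G$-module $W$,
\[\td(W,G)\ \ge\ \beta(G,W)-1.\]
Indeed, if $p\in S(W)^G$ is indecomposable of degree $d$ and one assumes $d\ge\td(W,G)+2$, then every form of degree $d-1$ already lies in $(S(W)^G_+)$; multiplying by $W$ shows that $S(W)_d=\sum_{j=1}^{d-1}S(W)^G_jS(W)_{d-j}$, and applying the Reynolds operator (available since $\mathrm{char}\,\F\nmid|G|$), which is $S(W)^G$-linear, expresses $p$ as a sum of products of two positive-degree invariants, contradicting indecomposability. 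In the opposite direction I would establish a \emph{conversion principle}: a nonzero coinvariant of top degree can always be upgraded to an indecomposable invariant of degree one higher in a modestly enlarged module, so that $\beta(G)\ge\td(W,G)+1$. This converse is where the real work lies.

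For the strict inequality \eqref{eq:3thm} I would make the conversion principle explicit for an induced module. Fix coset representatives $g_1=e,\dots,g_n$ of $H$ in $G$, so $n=[G:H]\ge2$, choose an $\F H$-module $V$ carrying an indecomposable invariant $f\in S(V)^H$ with $\deg f=\beta(H)$, and set $W=\Ind_H^G(V\oplus\F)=\Ind_H^G V\oplus\F[G/H]$. Writing $z_1,\dots,z_n$ for the coordinates of the permutation summand $\F[G/H]$ and $f_i=g_i\cdot f$, the element
\[P\ =\ \mathrm{Tr}_H^G(z_1 f)\ =\ \sum_{i=1}^n z_i\, f_i\ \in\ S(W)^G\]
is $G$-invariant of degree $\beta(H)+1$. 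The key point is that although $z_1 f$ is a \emph{decomposable} $H$-invariant (a product of the invariants $z_1$ and $f$), its transfer $P$ should be an \emph{indecomposable} $G$-invariant; granting this, $\beta(G)\ge\beta(G,W)\ge\beta(H)+1$. To prove indecomposability I would grade $S(W)$ by the bidegree (degree in the $z_i$, degree in the $\Ind_H^G V$ variables) and, in any putative decomposition $P=\sum a_kb_k$, isolate the bidegree $(1,\beta(H))$ part; projecting onto the $eH$-summand via the $H$-equivariant map $\pi\colon S(W)\to S(V\oplus\F)$ then forces a factorization of $f$ into positive-degree $H$-invariants, unless the decomposition secretly involves the term $(\sum_i z_i)\cdot\mathrm{Tr}_H^G(f)$.

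Ruling out exactly this last term is the main obstacle in Part 1: the permutation module $\F[G/H]$ carries the linear invariant $\sum_i z_i$, which is precisely what could spoil indecomposability. I would overcome it either by a finer analysis of the multidegree indexed by the cosets -- the monomials of $P$ are ``monochromatic'' (all variables belonging to a single coset), whereas the off-diagonal monomials of $(\sum_i z_i)\,\mathrm{Tr}_H^G(f)$ are not -- or, when a nontrivial linear character $\chi$ of $H$ is available, by replacing the trivial summand with $\F_\chi$, so that $\Ind_H^G\F_\chi$ has no linear $G$-invariant at all and the obstructing term simply disappears.

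For \eqref{eq:beta+beta} I would run the same philosophy across the extension $1\to N\to G\to G/N\to1$. Choose a $G/N$-module $U$ with $\td(U,G/N)\ge\beta(G/N)-1$ and an $N$-module $M$ with $\td(M,N)\ge\beta(N)-1$, both obtained from the lower bound for $\td$ above by taking $U,M$ that realize the respective Noether numbers, and form the $G$-module $W=U\oplus\Ind_N^G M$, where $U$ is inflated ($N$ acting trivially) and $M$ is embedded in $\Ind_N^G M$ as the $eN$-summand. Using $S(W)^G=(S(W)^N)^{G/N}$, I would prove a superadditivity statement: if $u\in S(U)$ and $v\in S(M)$ represent nonzero top coinvariants for $G/N$ and $N$ respectively, then $uv$ represents a nonzero coinvariant for $G$, whence
\[\td(W,G)\ \ge\ \td(U,G/N)+\td(M,N)\ \ge\ \beta(G/N)+\beta(N)-2.\]
Combining this with the conversion principle $\beta(G)\ge\td(W,G)+1$ gives $\beta(G)\ge\beta(N)+\beta(G/N)-1$. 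Here the decisive and hardest step is the superadditivity: one must show that passing to $G$-coinvariants does not collapse the product $uv$, for which the induced structure of $\Ind_N^G M$ -- guaranteeing that the $N$-coinvariant $v$ survives and remains ``independent'' of the inflated part $u$ -- is essential, exactly as announced in the abstract.
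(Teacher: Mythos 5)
Your overall architecture---work with the top degree $\td(S(W)_G)$ of the coinvariant algebra, prove $\beta(G,W)\le \td(S(W)_G)+1$ via the Reynolds operator, and obtain strictness from a converse ``conversion principle''---is exactly the paper's (Lemma~\ref{lemma:beta=b+1}: $\beta(G)=b(G)+1$, quoted from \cite{CzD:3}), and your plan for \eqref{eq:beta+beta} follows the paper's route to Theorem~\ref{thm:G/N}. But the attempt has genuine gaps at precisely the three places you yourself flag as ``the real work'', and none of them is closed. The general conversion principle $\beta(G)\ge b(G,W)+1$ is asserted, never proved (the paper cites \cite{CzD:3} for it), yet for \eqref{eq:beta+beta} you use it as a black box; likewise the superadditivity $b(G,U\oplus \Ind_N^G V)\ge b(G/N,U)+b(N,V)$ is asserted, not proved---its actual content is Lemma~\ref{lemma:image} (the projection $\pi$ maps the Hilbert ideal of $S(W)$ onto the ideal generated by $S(U)_+^{G/N}$ and $S(V)_+^N$, proved via $\tau^G=\tau_N^G\circ\tau^N$ and the case analysis \eqref{eq:lenyeg}), which yields a degree-preserving surjection $S(W)_G\to S(U)_{G/N}\otimes S(V)_N$. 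Saying ``one must show that $uv$ does not collapse'' names the statement without supplying any mechanism for it.

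The more serious problem is Part~1, where both of your proposed repairs of the indecomposability of $P=\sum_i z_i f_i$ fail as stated. Fix (b) is broken: if $z_1$ spans $\F_\chi$ with $\chi\neq 1$, then $z_1 f$ is a semiinvariant of weight $\chi$, so $\tau^H(z_1 f)=0$ and hence $\tau^G(z_1 f)=0$, while the coset sum $\sum_i z_i f_i$ is no longer $G$-invariant (one gets $g\cdot(z_if_i)=\chi(h_i)\,z_{\sigma(i)}f_{\sigma(i)}$); repairing this forces $f$ to be a weight-$\chi^{-1}$ relative invariant, whose degree you no longer control---and for perfect $H$ no nontrivial $\chi$ exists at all. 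Fix (a) is not a proof: with the trivial summand, projecting onto the $1\cdot H$-block cannot detect indecomposability, because $\pi(P)=z_1f$ is itself a product of two positive-degree $H$-invariants, so membership of $\pi(P)$ in $\bigl(S(V\oplus\F)_+^H\bigr)^2$ yields no contradiction; and the observation that $P$ is ``monochromatic'' while $(\sum_i z_i)\mathrm{Tr}_H^G(f)$ is not does not exclude a decomposition $\sum_k a_kb_k$ whose mixed monomials cancel in the sum. Controlling exactly such cancellations is the content of the paper's Proposition~\ref{prop:complement} (pass to $R=S(W)/J$, expand elements as $\sum_{g\in\mathcal C}g\cdot t_g$, use $\tau^G(g\cdot t)=\tau^G(t)$ and an element $g\in\mathcal C\setminus H$---the point where properness of $H$ enters), none of which appears in your sketch. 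Note that the paper sidesteps your obstruction altogether: instead of manufacturing an indecomposable invariant of degree $\beta(H,V)+1$, Theorem~\ref{thm:betab} proves the coinvariant statement $b(G,\Ind_H^GV)\ge\beta(H,V)$---that $f$ itself survives modulo the Hilbert ideal, where no auxiliary variable and no linear invariant can interfere (Corollary~\ref{cor:felemelt} plus an application of $\tau^H$)---and only then converts to the strict inequality \eqref{eq:3thm} by Lemma~\ref{lemma:beta=b+1}. Until you either prove your transfer element indecomposable for every proper $H$ or establish your two black-box principles, the argument does not yet yield Theorem~\ref{thm:SF}.
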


\begin{example} {\rm 
Inequality \eqref{eq:beta+beta} is sharp  as it is shown by the following examples where  \eqref{eq:beta+beta} holds with equality: 
\begin{enumerate} 
\item For the non-abelian semidirect product $G=C_5\rtimes C_4$ we have $\beta(G)=8=5+4-1=\beta(C_5)+\beta(G/C_5)-1$ by \cite[Proposition 3.2]{CzD:1}. 
\item   For the non-abelian semidirect product $G=C_p\rtimes C_3$ 
(where $p$ is a prime  congruent to $1$ modulo $3$) 
we have $\beta(G)=p+3-1=\beta(C_p)+\beta(G/C_p)-1$ 
by \cite{Cz1}.  
\item For a divisor $m$ of $n$ we have $\beta(C_n\oplus C_m)=
n+m-1=\beta(C_n)+\beta(C_m)-1$ by classical results on the Davenport constant, see for example \cite{CzDG} for  a survey on connections 
between the Noether number and the Davenport constant (studied extensively in  arithmetic  combinatorics). 
\end{enumerate} 
Inequality  \eqref{eq:3thm} is  also sharp, as  by \cite{CzD:2} we know that  if $H$ is a cyclic subgroup of  index  $2$ in $G$, and $G$ is not cyclic or dicyclic, then 
$\beta(G)=\beta(H)+1$. }
\end{example}

Theorem~\ref{thm:SF} is obtained by studying the top degree of the coinvariant algebra $S(W)_G$, so let us recall the relevant definitions first.  
Note that $S(W)=\bigoplus_{d=0}^\infty S(W)_d$ is graded, $S(W)_0=\F\subset S(W)$, and the degree $1$ homogeneous component is 
$S(W)_1=W\subset S(W)$. The $G$-action preserves the grading. We shall deal with commutative graded $\F$-algebras $R=\bigoplus_{d=0}^\infty R_d$ such that $R_0=\F$, 
and  we shall denote by $R_+=\bigoplus_{d>0}R_d$ the ideal spanned by the homogeneous components of positive degree. For a graded vector space $X=\bigoplus_{d=0}^\infty X_d$ we set  
\[\td(X)=\sup\{d\mid X_d\neq 0\}.\] 

The {\it Hilbert ideal} in $S(W)$ is the ideal $S(W)_+^GS(W)$ generated by the homogeneous invariants of positive degree, and the corresponding factor algebra 
\[S(W)_G=S(W)/S(W)_+^GS(W)\] 
is called the {\it algebra of coinvariants}. 
Our results will concern the following quantity associated with the $\F G$-module $W$: 
\begin{align*}   
b(G,W)=\td(S(W)_G).
\end{align*} 
Note that $b(G,W)$ is the minimal non-negative integer $d$ such that 
the $S(W)^G$-module $S(W)$ is generated by homogeneous elements of degree at most $d$. 
Following \cite{kohls-sezer} and \cite{CzDG} we introduce also 
\[b(G)=\sup\{b(G,W) \mid W\text{ is an }\F G\text{-module}\}.\]
Remark that by  the graded Nakayama lemma  $\beta(G,W)$ can also be recovered as the top degree of a certain finite dimensional algebra, namely 
\begin{align*}\beta(G,W)=\td(S(W)_+^G/(S(W)_+^G)^2). 
\end{align*}

Our first main result shows that the Noether number is always strictly monotone on subgroups: 
 
\begin{theorem} \label{thm:betab}
Let $H\subsetneq G$ be a proper subgroup of $G$ and let $V$ be an $\F H$-module. 
Then the inequality
\begin{equation}\label{eq:1thm}
b(G, \Ind_H^G V ) \ge \beta(H,V)
\end{equation}
holds. 
In particular, we have the  inequality 
\begin{equation}\label{eq:2thm} b(G)\ge \beta(H). 
\end{equation} 
 \end{theorem}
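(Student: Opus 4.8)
The plan is to prove the displayed inequality~\eqref{eq:1thm}; the inequality~\eqref{eq:2thm} then follows at once, since $b(G)\ge b(G,\Ind_H^G V)\ge \beta(H,V)$ for every $\F H$-module $V$, and taking the supremum over $V$ gives $b(G)\ge\beta(H)$. So write $W=\Ind_H^G V$ and $m=\beta(H,V)$, fix coset representatives $g_1=e,g_2,\dots,g_n$ of $H$ in $G$ (so $n=[G:H]\ge 2$ because $H\subsetneq G$), and decompose $W=\bigoplus_{i=1}^n W_i$ with $W_i=g_i\otimes V$. This makes $S(W)=\bigotimes_i S(W_i)$ a $\mathbb Z^n$-graded algebra on which $G$ acts by permuting the blocks according to its action on the cosets, with $H=\mathrm{Stab}(W_1)$. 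Let $\pi_i\colon S(W)\to S(W_i)$ be the projection onto the pure block-$i$ component; each $\pi_i$ is an $\F$-algebra homomorphism, and $\pi_1$ restricts to the identity on $S(W_1)=S(V)$. Using the Nakayama description from the excerpt, choose a homogeneous minimal generator $f\in S(V)^H$ of degree $m$, i.e.\ $f\notin (S(V)_+^H)^2$; I view $f$ inside $S(W_1)\subset S(W)$ and aim to show that $f$ does \emph{not} lie in the Hilbert ideal $S(W)_+^G S(W)$, so that it survives as a nonzero element of degree $m$ in $S(W)_G$, giving $b(G,W)=\td(S(W)_G)\ge m$.

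Two preliminary observations set things up. Since $\operatorname{char}\F\nmid |G|$, the Reynolds operators $R_G\colon S(W)\to S(W)^G$ and $R_H\colon S(V)\to S(V)^H$ are available and are module maps over the respective invariant rings. Applying $R_H$ shows that a minimal generator satisfies $f\notin S(V)_+^H\,S(V)_+$: if $f=\sum_k u_k t_k$ with $u_k\in S(V)_+^H$ and $t_k\in S(V)_+$, then $f=R_H(f)=\sum_k u_k R_H(t_k)\in (S(V)_+^H)^2$, contradicting minimality. Secondly, for any $a\in S(W)^G$ and any $i$, invariance of $a$ together with $\pi_i(a)=[a]_{(\deg a)e_i}$ and $g_i\cdot W_1=W_i$ gives $\pi_i(a)=g_i\cdot\pi_1(a)$; moreover $\pi_1(a)\in S(V)^H$ because $H$ fixes the first block.

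The heart of the argument, and the step I expect to be the main obstacle, is to rule out $f\in S(W)_+^G S(W)$ using \emph{all} the blocks simultaneously: projecting onto $W_1$ alone only recovers $f\in S(V)_+^H S(V)$, which is automatic and insufficient. Suppose then $f=\sum_j a_j b_j$ with $a_j\in S(W)^G$ homogeneous of degree $d_j\ge 1$ and $b_j$ homogeneous of degree $m-d_j$. Put $\alpha_j:=\pi_1(a_j)\in S(V)_{d_j}^H$, apply each $\pi_i$, and transport the resulting identity back to $S(V)$ by the algebra isomorphism $g_i^{-1}\colon S(W_i)\to S(W_1)$; using $\pi_i(a_j)=g_i\cdot\alpha_j$ one obtains, in $S(V)$,
\begin{align*}
\sum_j \alpha_j\,\gamma_j^{(i)}=\begin{cases} f,& i=1,\\ 0,& i\ne 1,\end{cases}
\qquad \gamma_j^{(i)}:=g_i^{-1}\cdot\pi_i(b_j)\in S(V)_{m-d_j}.
\end{align*}
The decisive point is that for the terms with $d_j=m$ the factor $b_j$ is a constant, hence $\gamma_j^{(i)}=b_j$ is the \emph{same} constant for every $i$; thus the degree-$m$ contribution $q:=\sum_{d_j=m} b_j\,\alpha_j\in S(V)_m^H$ is common to all the identities above. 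Reading off any index $i\ge 2$ (which exists precisely because $n\ge 2$) gives $q=-\sum_{d_j<m}\alpha_j\gamma_j^{(i)}\in S(V)_+^H\,S(V)_+$, since each $\alpha_j\in S(V)_+^H$ and each $\gamma_j^{(i)}\in S(V)_+$ there; the $i=1$ identity then yields $f=q+\sum_{d_j<m}\alpha_j\gamma_j^{(1)}\in S(V)_+^H\,S(V)_+$. This contradicts the first preliminary observation, so $f\notin S(W)_+^G S(W)$, the image of $f$ in $S(W)_G$ is nonzero, and therefore $b(G,W)\ge m=\beta(H,V)$, as required.
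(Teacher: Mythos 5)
Your proposal is correct and, despite the different packaging, is essentially the paper's own proof: your multiplicative block projections $\pi_i$ are precisely the paper's map $\eta\colon S(W)\to R=S(W)/J$ (which kills exactly the cross-block products) followed by projection onto $\F\oplus I^{(g_i)}$, your identity $\pi_i(a)=g_i\cdot\pi_1(a)$ for $a\in S(W)^G$ plays the role of the paper's relative Reynolds computation \eqref{eq:16}, your use of an index $i\ge 2$ to place the common invariant part $q$ in $S(V)_+^H S(V)_+$ is the paper's use of $g\in\mathcal C\setminus H$ in Proposition~\ref{prop:complement}, and your preliminary observation that a minimal generator lies outside $S(V)_+^H S(V)_+$ is the paper's concluding application of $\tau^H$. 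The only real difference is organizational: the paper proves the ideal containment $(S(W)_+^G S(W))\cap S(V)\subseteq S(V)_+^H S(V)_+$ (Corollary~\ref{cor:felemelt}) and deduces the degree bound from a graded surjection onto $N^H/(N_+^H)^2$, whereas you run the same computation as a contradiction for a single top-degree minimal generator $f$ --- logically equivalent and equally valid.
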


Our second main result is the following finer statement for the case of a normal subgroup: 

\begin{theorem}\label{thm:G/N} Let $N$ be a normal subgroup of $G$, 
$U$ an $\F (G/N)$-module and $V$ an $\F N$-module.  Then we have the inequality 
\begin{align}\label{eq:b+b} b(G, U\oplus \Ind_N^G V) \ge b(G/N,U) + b(N,V).
\end{align} 
\end{theorem}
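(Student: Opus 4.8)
The plan is to exploit the decomposition of $W$ as a direct sum of two $G$-submodules, $W=U\oplus M$ with $M:=\Ind_N^G V$, which gives a bigrading $S(W)=S(U)\otimes S(M)$ that $G$ preserves bidegree-wise. Consequently both $S(W)^G$ and the Hilbert ideal $I:=S(W)_+^G S(W)$ are bihomogeneous. Writing $b_1:=b(G/N,U)=\td(S(U)_{G/N})$ and $b_2:=b(N,V)=\td(S(V)_N)$, I would first choose $p\in S(U)_{b_1}$ whose class in $S(U)_{G/N}$ is nonzero and $q\in S(V)_{b_2}$ whose class in $S(V)_N$ is nonzero. Using the canonical embedding $V\cong 1\otimes V\hookrightarrow M$ I regard $q\in S(V)\subseteq S(M)$, so that $pq$ is bihomogeneous of bidegree $(b_1,b_2)$ and total degree $b_1+b_2$. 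The whole theorem then reduces to showing $pq\notin I$, i.e.\ that $pq$ represents a nonzero element of $S(W)_G$, since this yields $b(G,W)=\td(S(W)_G)\ge b_1+b_2$.

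The device for detecting $pq\notin I$ is an $N$-equivariant algebra retraction. Because $N$ is normal, for any coset representative $g$ the summand $g\otimes V$ of $M$ is an $N$-submodule (one checks $n\cdot(g\otimes v)=g\otimes(g^{-1}ng)v$), so $M|_N=\bigoplus_g(g\otimes V)$ and the projection $\pi\colon M\to 1\otimes V=V$ onto the identity-coset summand is $N$-equivariant. Let $\Pi:=\mathrm{id}_{S(U)}\otimes S(\pi)\colon S(W)\to S(U)\otimes S(V)=S(U\oplus V)$; it is a surjective, bidegree-preserving, $N$-equivariant algebra map fixing $S(U)$ and $S(V)$, so $\Pi(pq)=p\otimes q$. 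On the target I introduce the ideal $K:=S(U)^{G/N}_+\,S(U\oplus V)+S(V)^N_+\,S(U\oplus V)$ generated by the positive-degree $G/N$-invariants of $S(U)$ together with the positive-degree $N$-invariants of $S(V)$. Then $S(U\oplus V)/K\cong S(U)_{G/N}\otimes S(V)_N$, whose top-degree part is $(S(U)_{G/N})_{b_1}\otimes(S(V)_N)_{b_2}$; hence the class of $pq=p\otimes q$ there is $\bar p\otimes\bar q\neq 0$, so $pq\notin K$.

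It therefore suffices to prove the containment $\Pi(I)\subseteq K$: granting it, $pq\in I$ would force $pq\in\Pi(I)\subseteq K$, a contradiction. Since $\Pi$ is an algebra map and $I$ is generated by bihomogeneous invariants, this reduces to the key claim that $\Pi(f)\in K$ for every bihomogeneous $f\in S(W)^G_+$, say of bidegree $(a,b)$. Here $\Pi(f)$ is $N$-invariant, so $\Pi(f)\in S(U)\otimes S(V)^N$, and I would finish with a short case analysis: if $b>0$ then every $V$-component of $\Pi(f)$ lies in $S(V)^N_+$, whence $\Pi(f)\in S(V)^N_+\,S(U\oplus V)\subseteq K$; if $b=0$ then $f\in S(U)_a$ with $a>0$, and $G$-invariance forces $f\in S(U)^{G/N}_a$ (as $N$ acts trivially on $U$), so $\Pi(f)=f\in S(U)^{G/N}_+\subseteq K$. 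The main obstacle is exactly this last point: one must resist simply passing to the $N$-coinvariants of $S(W)$ (which annihilate all of $S(U)_+$ and destroy the $U$-contribution), and instead keep the $G/N$-invariance of the pure-$U$ part and the $N$-invariance of the $V$-part alive \emph{separately}. What makes both survive is that $\Pi$, though only $N$-equivariant, automatically turns a $G$-invariant of bidegree $(a,0)$ into a $G/N$-invariant of $S(U)$, while the normality of $N$ is precisely what supplies the $N$-equivariant retraction $\pi$ needed to build $\Pi$.
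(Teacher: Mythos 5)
Your proof is correct, and while it shares the paper's overall skeleton, it replaces the paper's key lemma with a genuinely different and more elementary argument. Both proofs rest on the same $N$-equivariant retraction: your $\Pi$ is exactly the paper's map $\pi\colon S(W)\to S(U\oplus V)$ killing the summands $g\cdot V$ for $g\notin N$, and both compare $S(W)_G$ against $S(U)_{G/N}\otimes S(V)_N$. The difference is in how the crucial containment is established. The paper's Lemma~\ref{lemma:image} computes $\pi\bigl(S(W)_+^G S(W)\bigr)$ \emph{exactly} as the ideal generated by $S(U)_+^{G/N}$ and $S(V)_+^N$, via the factorization $\tau^G=\tau_N^G\circ\tau^N$ through the relative Reynolds operator and the explicit case-by-case evaluation \eqref{eq:lenyeg} of $\pi(\tau^G(uv))$. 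You instead prove only the inclusion $\Pi(I)\subseteq K$ --- which is all the inequality needs --- by a bidegree argument: a bihomogeneous $G$-invariant of bidegree $(a,b)$ with $b>0$ has $N$-invariant image, and since $N$ acts trivially on $U$, expanding along a basis of $S(U)_a$ forces every $S(V)$-coefficient into $S(V)^N_+$, while an invariant of bidegree $(a,0)$ with $a>0$ is itself an element of $S(U)_+^{G/N}$; this avoids the transfer computation entirely. Your finishing move also differs, though only cosmetically: instead of observing that $\rho\circ\Pi$ kills the Hilbert ideal and hence induces a degree-preserving surjection from $S(W)_G$ onto $S(U)_{G/N}\otimes S(V)_N$ (the paper's route, which your containment equally yields), you certify the single element $pq$ of degree $b(G/N,U)+b(N,V)$ as nonzero in $S(W)_G$; both give \eqref{eq:b+b}. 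What the paper's stronger equality buys is a precise description of the image of the Hilbert ideal, which it reuses in the $k$th Noether number variant (Theorem~\ref{thm:G/N_k}); what your version buys is a shorter, transfer-free verification of the one inclusion actually needed, isolating cleanly the two structural inputs --- normality of $N$ (for the $N$-equivariance of $\Pi$) and triviality of the $N$-action on $U$ (for the $b=0$ case).
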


To see how  these two theorems imply Theorem~\ref{thm:SF} the key step is the following result from \cite{CzD:3}:

\begin{lemma}\label{lemma:beta=b+1} 
We have the equality 
\[\beta(G)=b(G)+1.\] 
\end{lemma}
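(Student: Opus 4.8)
The plan is to prove the two inequalities $\beta(G)\le b(G)+1$ and $\beta(G)\ge b(G)+1$ separately, in each case arguing one $\F G$-module at a time and then passing to the supremum. Throughout I would exploit the hypothesis $\mathrm{char}\,\F\nmid|G|$ via the Reynolds operator $R\colon S(W)\to S(W)^G$, $R(f)=\tfrac{1}{|G|}\sum_{g\in G}g\cdot f$, a degree-preserving $S(W)^G$-module projection onto the invariants. Writing $A=S(W)^G$ and $C=\ker R$, this yields a splitting $S(W)=A\oplus C$ of graded $A$-modules; since $A_+S(W)=A_+\oplus A_+C$ (using $A_+A=A_+$), one gets $S(W)_G\cong\F\oplus(C/A_+C)$, so that $b(G,W)=\td(C/A_+C)$ is the top degree of a minimal $A$-module generating set of $C$. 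This reformulation is the technical backbone of both directions.

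For $\beta(G)\le b(G)+1$ I would show $\beta(G,W)\le b(G,W)+1$ for every $W$. Put $d=b(G,W)$ and let $f\in A_e$ be indecomposable (that is, $f\notin A_+^2$) with $e=\beta(G,W)$; the goal is $e\le d+1$. Since $S(W)$ is a polynomial ring, write $f=\sum_k\ell_k u_k$ with $\ell_k\in W=S(W)_1$ and $u_k\in S(W)_{e-1}$, split $u_k=a_k+c_k$ along $A_{e-1}\oplus C_{e-1}$, and apply $R$ to get $f=\sum_k a_kR(\ell_k)+\sum_k R(\ell_k c_k)$. The first sum lies in $A_+^2$ as soon as $e\ge2$. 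If moreover $e\ge d+2$, then $e-1>d$ forces $C_{e-1}=(A_+C)_{e-1}$, so each $c_k$ is an $A_+$-combination of elements of $C$; pulling the invariant factors out of $R$ shows the second sum lies in $A_+^2$ as well. This contradicts indecomposability, so $e\le d+1$, and taking suprema gives $\beta(G)\le b(G)+1$.

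For the reverse inequality I would, given $W$, manufacture an indecomposable invariant of degree $b(G,W)+1$ on an enlarged module. Choose $0\neq\bar p\in(S(W)_G)_d$ with $d=b(G,W)$ and a homogeneous lift $p\in S(W)_d$ outside the Hilbert ideal $I=A_+S(W)$; set $M=\F G\cdot p\subseteq S(W)_d$ and $U=M^{*}$. The canonical element $\theta=\sum_i m_i\otimes m_i^{*}\in(M\otimes M^{*})^G$ then lies in $(S(W)\otimes S(U))^G=S(W\oplus U)^G$ and is homogeneous of total degree $d+1$ with $U$-degree exactly $1$. To see $\theta$ is indecomposable I would use the bigrading by $(W\text{-degree},U\text{-degree})$: in any relation $\theta=\sum_k a_kb_k$ with $a_k,b_k\in S(W\oplus U)^G_+$, the $U$-degree being $1$ forces one factor into $A_+$ and the other into $(S(W)\otimes U)^G=\operatorname{Hom}_G(M,S(W))$. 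Identifying $\theta$ with the inclusion $\iota\colon M\hookrightarrow S(W)$, this writes $\iota=\sum_k a_k\phi_k$ with $a_k\in A_+$, whence $p=\iota(p)=\sum_k a_k\phi_k(p)\in I$, a contradiction. Thus $\beta(G,W\oplus U)\ge d+1$, so $\beta(G)\ge b(G,W)+1$ for every $W$; taking the supremum gives $\beta(G)\ge b(G)+1$, and combining the two bounds proves the equality.

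The step I expect to be the main obstacle is the indecomposability of $\theta$ in the second part. One must justify cleanly that a product of two positive-degree invariants of $U$-degree $1$ admits only the stated factorization, correctly identify the $U$-degree-one invariants with $G$-equivariant maps $M\to S(W)$, and track that under this identification the $S(W)^G$-module structure matches multiplication, so that the canonical element corresponds exactly to the inclusion of $M$ and decomposability collapses to the excluded membership $p\in I$. The first direction is comparatively routine once the splitting $S(W)=A\oplus C$ and the description $b(G,W)=\td(C/A_+C)$ are in hand.
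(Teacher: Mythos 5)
Your proof is correct and follows essentially the same route as the paper, which handles the inequality $\beta(G)\le b(G)+1$ via the Reynolds operator and the reverse inequality by invoking Lemma~3.1 of \cite{CzD:3}, i.e.\ the existence, for each $W$, of a module $Z$ with $\beta(G,Z)\ge b(G,W)+1$. The only difference is that the paper cites \cite{CzD:3} for both steps, whereas you supply the details in full --- your splitting $S(W)=A\oplus C$ for the upper bound and your construction $Z=W\oplus M^{*}$ with the canonical invariant $\theta$ (shown indecomposable via the bigrading and the identification $(S(W)\otimes M^{*})^G\cong\operatorname{Hom}_{\F G}(M,S(W))$) are precisely the arguments behind the cited Corollary~3.2 and Lemma~3.1, and both are carried out correctly.
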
 
\begin{proof} 
The inequality $\beta(G,W)\le b(G,W)+1$ for any $W$ is a consequence of the existence of the Reynolds operator $\tau^G:S(W)\to S(W)^G$ given for a linear action of a finite group $G$ on an $\F$-vector space $X$ by the formula 
\[\tau^G(x)=\frac{1}{|G|}\sum_{g\in G} g\cdot x \quad (x\in X)\] 
(see for example the proof of Corollary 3.2 in \cite{CzD:3} for the details). Hence we have the inequality $\beta(G)\le b(G)+1$. 
On the other hand, Lemma 3.1 in \cite{CzD:3} asserts in particular that for any $\F G$-module $W$ there exists an $\F G$-module $Z$ such that $\beta(G,Z)\ge b(G,W)+1$. 
This clearly implies the reverse inequality $\beta(G)\ge b(G)+1$. 
\end{proof}

The paper is organized as follows.
Theorem~\ref{thm:betab} is proved in Section~\ref{sec:subgroup}, and Theorem~\ref{thm:G/N} is proved in Section~\ref{sec:lower}. 
For an arbitrary positive integer $k$ the $k$th Noether number 
$\beta_k(G)$ was introduced in \cite{CzD:1} where it was shown that 
$\beta(G)\le \beta_{|G:H|}(H)$ for any subgroup $H$ of $G$ and 
$\beta(G)\le \beta_{\beta(G/N)}(N)$ for any normal subgroup $N$ of $G$. 
These results can be very efficiently applied to obtain good bounds for the Noether number of $G$ from the $k$th Noether numbers of its subquotients, see for example \cite{CzDSz}. 
It seems worthwhile therefore to extend Theorem~\ref{thm:betab} and 
Theorem~\ref{thm:G/N} for the $k$th Noether number. This is done in Section~\ref{sec:k-th_noether}.


\section{Lower bound in terms of subgroups}\label{sec:subgroup}

Take a proper subgroup $H$ of $G$. Choose a system 
$\mathcal C$ of left $H$-coset representatives in $G$. We shall assume that $1\in\mathcal C$. Let $W$ be an $\F G$-module containing an $\F H$-submodule $V$ such that 
$ W = \bigoplus_{g \in \mathcal C} g\cdot V$. That is, $W\cong \mathrm{Ind}^G_H(V)$, the 
$\F G$-module induced from the $\K H$-module $V$. 
The projection $\pi: W \to V$ with kernel $\bigoplus_{g\in\mathcal{C}\setminus \{1\}}g\cdot V$ extends to an  $\F$-algebra surjection $\pi: S(W)\to S(V)$ from the symmetric tensor algebra $S(W)$ onto its subalgebra $S(V)$. Clearly $\pi$ is $H$-equivariant and is degree preserving. Equality \eqref{eq:schmid1} in Lemma~\ref{lemma:schmid} is a consequence of the following: 
\begin{align} \label{eq:schmid0} 
\pi(S(W)^G) = S(V)^H. 
\end{align}

\begin{example}\label{example:dihedral} 
{\it Equality may hold in \eqref{eq:schmid1} even if $H\neq G$:} 
{\rm Let $G$ be the dihedral group of order $2n$, and let $H$ be its cyclic index two subgroup consisting of the rotations. 
Let $W$ be any irreducible $2$-dimensional $\F G$-module. Then $W=\Ind_H^G V$, where $V$ is the $1$-dimensional $\F H$-module on which the generators of $H$ acts via multiplication by a primitive $n$th root of unity. It is well known that $S(V)^H$ is generated by a single invariant of degree $n$, whereas $S(W)^G$ is generated by  homogeneous invariants of degree $2$ and $n$. Therefore $\beta(W,G)=n=\beta(V,H)$ in this case.  }
\end{example} 

Equality \eqref{eq:schmid0} implies that 
the Hilbert ideal $S(W)^G_+S(W)$ in $S(W)$ is mapped by $\pi$ into the Hilbert ideal $S(V)^H_+S(V)$ in $S(V)$, whence  we have an induced graded $\F$-algebra epimorphism $S(W)_G \to S(V)_H$ between the corresponding algebras of coinvariants.
This shows that
\begin{align}\label{eq:b}
b(G,\Ind_H^G V ) \ge b(H,V).
\end{align}
The main result of this section is the strengthening \eqref{eq:1thm} in Theorem~\ref{thm:betab} of \eqref{eq:b}.  
In order to prove it we shall consider the factor algebra 
$R=S(W)/J$  where $J$ is the ideal of $S(W)$ generated by the set of quadratic elements 
\[\{(g\cdot v)(g'\cdot v')\mid v,v'\in V,\ g,g'\in\mathcal C,\  g\neq g'\}.\] 
Denote by $\eta:S(W)\to R$ the natural surjection. 
Since $J$ is a $G$-stable homogeneous ideal, the algebra $R$ inherits from $S(W)$ a grading and a $G$-action via degree preserving $\F$-algebra automorphisms, so that 
$\eta$ is $G$-equivariant and preserves the degree. For each $g\in\mathcal C$ the subspace $S(g\cdot V)_+ \subset S(W)$ is mapped by $\eta$  isomorphically to an ideal  $I^{(g)}$ of $R$.  Obviously $R_+=\bigoplus_{g\in \mathcal C}I^{(g)}$ is the ring theoretic direct sum of these ideals, and  
\[R=\F \oplus \bigoplus_{g\in \mathcal C}I^{(g)},\] 
where the ideals $I^{(g)}$ ($g\in \mathcal{C}$) annihilate each other, the direct summand $\F$ is a subring of $R$ containing the identity element of $R$, and $\F=R_0$ is the degree zero homogenous component of the graded $\F$-algebra $R$.  
Moreover, for each $g\in \mathcal C$ the restriction of $\eta$ to the subalgebra 
$S(g\cdot V)\subset S(W)$ is an isomorphism 
\[\eta\vert_{S(g\cdot V)}:S(g\cdot v)\stackrel{\cong}\longrightarrow \F\oplus I^{(g)}\]  
of graded algebras.  
For ease of notation write $T$ for the subalgebra 
\[T=\F\oplus I^{(1_G)}\subset R.\] 
Then  
\[\eta\vert_{S(V)}:S(V)\stackrel{\cong}\longrightarrow T\] 
is an $H$-equivariant isomorphism of graded $\F$-algebras.

\begin{proposition} \label{prop:complement}
We have $R_+^G R \cap T \subseteq T_+^H T_+$.
\end{proposition}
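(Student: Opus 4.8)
The plan is to first describe the invariants $R_+^G$ explicitly, and then to feed a general element of the ideal they generate into the direct sum decomposition $R=\F\oplus\bigoplus_{g\in\mathcal C}I^{(g)}$, exploiting that distinct summands $I^{(g)}$ annihilate each other. The starting observation is that $G$ permutes the ideals $I^{(g)}$ according to the natural action of $G$ on the coset space $\mathcal C\cong G/H$, the point $1\in\mathcal C$ having stabiliser $H$. Hence for $F\in R_+^G$, writing $F=\sum_{g}F_g$ with $F_g\in I^{(g)}$, invariance forces $F_g=g\cdot F_1$ for all $g\in\mathcal C$ together with $F_1\in(I^{(1)})^H=T_+^H$. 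I would record this as a preliminary step: every element of $R_+^G$ is a \emph{transfer} $\mathrm{tr}(v):=\sum_{g\in\mathcal C}g\cdot v$ of some $v\in T_+^H$, and conversely each such $\mathrm{tr}(v)$ is $G$-invariant.

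Next I would take $X\in R_+^G R\cap T$ and write it as $X=\sum_i\mathrm{tr}(v_i)\,r^{(i)}$ with $v_i\in T_+^H$ and $r^{(i)}\in R$, decomposing each factor as $r^{(i)}=c_i+\sum_{g}r^{(i)}_g$ with $c_i\in\F$ and $r^{(i)}_g\in I^{(g)}$. Using $I^{(g)}I^{(g')}=0$ for $g\neq g'$, the $I^{(g)}$-component of $X$ collapses to $\sum_i\big(c_i(g\cdot v_i)+(g\cdot v_i)r^{(i)}_g\big)$. Transporting this back to $I^{(1)}=T_+$ through the isomorphism $g^{-1}\cdot(-)\colon I^{(g)}\xrightarrow{\ \cong\ }T_+$ and setting $s^{(i)}_g:=g^{-1}\cdot r^{(i)}_g\in T_+$, the requirement $X\in T$, i.e.\ the vanishing of every component with $g\neq1$, becomes
\[
\sum_i\big(c_i v_i+v_i s^{(i)}_g\big)=0\qquad\text{for every }g\in\mathcal C\setminus\{1\},
\]
while $X$ itself equals the one surviving component $\sum_i\big(c_i v_i+v_i r^{(i)}_1\big)$.

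The only term obstructing membership in $T_+^H T_+$ is the constant contribution $P:=\sum_i c_i v_i$, which a priori lies only in $T_+^H$; isolating and taming it is the crux of the argument, and this is exactly where properness of $H$ enters. Since $H\subsetneq G$ there exists $g_0\in\mathcal C$ with $g_0\neq1$, and the vanishing equation for this $g_0$ reads $P=-\sum_i v_i s^{(i)}_{g_0}$, which exhibits $P$ as an element of $T_+^H T_+$ (each $v_i\in T_+^H$, each $s^{(i)}_{g_0}\in T_+$). Consequently $X=P+\sum_i v_i r^{(i)}_1\in T_+^H T_+$, since the remaining sum is again a combination of products of a $T_+^H$-element with a $T_+$-element. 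I expect the genuinely delicate bookkeeping to be the tracking of the $I^{(g)}$-components under the $G$-permutation of the summands, but the real content is the simple observation that an off-diagonal ($g\neq1$) equation, available precisely because $H$ is a proper subgroup, converts the otherwise indecomposable constant term $P$ into a decomposable one.
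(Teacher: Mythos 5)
Your proof is correct, and its skeleton coincides with the paper's own: both arguments decompose everything along $R_+=\bigoplus_{g\in\mathcal C}I^{(g)}$, use that distinct summands annihilate each other to collapse all cross terms, and then exploit a single off-diagonal component equation---available precisely because $H\subsetneq G$ supplies some $g_0\in\mathcal C\setminus\{1\}$---to rewrite the otherwise problematic purely invariant contribution as an element of $T_+^HT_+$; in the paper that contribution is $\sum_{j}\tau^H(w_j)$, in yours it is $P=\sum_i c_iv_i$. The one genuine variation is your preliminary lemma that $R_+^G$ consists exactly of the transfers $\sum_{g\in\mathcal C}g\cdot v$ with $v\in T_+^H$ (which you correctly derive from the way $G$ permutes the summands $I^{(g)}$, the stabiliser of $I^{(1)}$ being $H$): the paper instead invokes surjectivity of the Reynolds operator $\tau^G:R_+\to R_+^G$ together with the identity \eqref{eq:16}, so its computation carries factors $\frac{1}{|G:H|}$ throughout. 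Your transfer description involves no averaging at all, so your proof of Proposition~\ref{prop:complement} is division-free and would remain valid in arbitrary characteristic---though the gain is local, since the application in the proof of Theorem~\ref{thm:betab} applies $\tau^H$ and hence still needs the standing non-modular hypothesis. A further small streamlining on your side is that by absorbing the ``pure invariant'' generators into the constant parts $c_i$ of the ring factors $r^{(i)}$ you handle in one sum what the paper splits into the two sums of \eqref{eloall}.
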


\begin{proof} 
Consider an arbitrary element $r \in R_+^G R$. Then  $r =\sum_i x_i \tau^G(y_i) + \sum_j \tau^G (z_j)$ for some $x_i,y_i,z_j \in R_+$, since $\tau^G:R_+\to R_+^G$ is surjective.
Observe now that any $x \in R_+$ can be expressed in the form $x = \sum_{g \in \mathcal C} g \cdot t_g$ where $t_g \in T_+$ for all $g \in \mathcal C$.
After expanding each $x_i,y_i,z_j$ in this form and then using the linearity of $\tau^G$ and the fact that $\tau^G(g \cdot t) = \tau^G(t)$ for any $g \in G$
we get an expression
\begin{align} \label{eloall}
r=\sum_{i\in\Lambda} (g_i \cdot u_i)\tau^G(v_i) + \sum_{j\in\Gamma}
\tau^G(w_j)
\quad \text{ where } u_i,v_i, w_j \in T_+, g_i \in \mathcal C.
\end{align} 
Note that here 
\begin{align} \label{eq:16}
(g_i \cdot u_i)\tau^G(v_i) = \frac{1}{|G:H|}(g_i \cdot u_i)(g_i \cdot \tau^H(v_i)) 
= \frac{1}{|G:H|}g_i \cdot (u_i \tau^H(v_i)).
\end{align}

Now assume in addition that $r \in T$. This means that for any $g \in \mathcal C \setminus H$ the terms in the sum \eqref{eloall} belonging to $ g\cdot T$ cancel each other. 
By gathering together all these terms we get  for each  $g  \in \mathcal C \setminus H$ the equation  
\[
0 =\sum_{i\in\Lambda: g_i =g} g \cdot (u_i\tau^H(v_i))  + g \cdot \sum_{j\in\Gamma} \tau^H(w_j) .
\]
After multiplying this equality from the left by $g^{-1}$ we conclude that in fact 
\[\sum_{j\in\Gamma}\tau^H(w_j) 
=\sum_{i\in\Lambda: g_i =g} u_i\tau^H(v_i)
\in T_+^HT_+\] 
(in this step we use that $H$ is a proper subgroup of $G$, so there exists an element $g\in \mathcal C \setminus H$).
Finally, after gathering together all terms in \eqref{eloall} belonging to $T$ we get
\begin{align*}
r =\frac{1}{|G:H|}\left( \sum_{i\in\Lambda: g_i \in H}  u_i\tau^H(v_i) + \sum_{j\in\Gamma} \tau^H(w_j)
\right) 
\in T_+^H T_+ & \qedhere. 
\end{align*}
\end{proof}

\begin{corollary}\label{cor:felemelt} 
We have $(S(W)_+^GS(W))\cap S(V)\subseteq S(V)_+^HS(V)_+$. 
\end{corollary}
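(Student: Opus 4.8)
The plan is to obtain the Corollary as a direct transport of Proposition~\ref{prop:complement} through the surjection $\eta\colon S(W)\to R$ together with the graded isomorphism $\eta\vert_{S(V)}\colon S(V)\stackrel{\cong}\to T$. The essential content has already been packaged into the Proposition, so what remains is bookkeeping about how $\eta$ interacts with the Hilbert ideal and with the subalgebra $S(V)$.

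First I would record that $\eta$, being a $G$-equivariant, degree-preserving $\F$-algebra surjection, sends positive-degree invariants to positive-degree invariants, i.e.\ $\eta(S(W)_+^G)\subseteq R_+^G$. Because $\eta$ is moreover surjective, the image of the whole Hilbert ideal is contained in the ideal generated by the image of its generators, giving $\eta(S(W)_+^GS(W))\subseteq R_+^GR$.

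Next I would pick an arbitrary element $f\in (S(W)_+^GS(W))\cap S(V)$ and apply $\eta$. By the previous step $\eta(f)\in R_+^GR$, while $f\in S(V)$ forces $\eta(f)\in\eta(S(V))=T$. Hence $\eta(f)\in R_+^GR\cap T$, and Proposition~\ref{prop:complement} yields $\eta(f)\in T_+^HT_+$.

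Finally I would translate this conclusion back along the isomorphism. Since $\eta\vert_{S(V)}$ is an $H$-equivariant isomorphism of graded algebras onto $T$, it matches the positive parts, the $H$-invariants, and products, so that $\eta(S(V)_+^HS(V)_+)=T_+^HT_+$. Therefore $\eta(f)=\eta(h)$ for some $h\in S(V)_+^HS(V)_+\subseteq S(V)$, and injectivity of $\eta\vert_{S(V)}$ gives $f=h\in S(V)_+^HS(V)_+$, as required. I expect no genuine obstacle here; the only point deserving care is verifying that $\eta\vert_{S(V)}$ really identifies $S(V)_+^HS(V)_+$ with $T_+^HT_+$, which follows because the isomorphism is simultaneously degree-preserving, $H$-equivariant, and multiplicative.
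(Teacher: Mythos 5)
Your proposal is correct and follows essentially the same route as the paper: both transport the statement through $\eta$ into $R$, apply Proposition~\ref{prop:complement} to $R_+^GR\cap T$, and pull the conclusion back using that $\eta$ restricts to an isomorphism $S(V)\cong T$ (the paper phrases this via $\eta^{-1}(T_+^HT_+)=S(V)_+^HS(V)_++\ker(\eta)$ and $S(V)\cap\ker(\eta)=(0)$, which is exactly your injectivity step in preimage form). Your care about $\eta\vert_{S(V)}$ identifying $S(V)_+^HS(V)_+$ with $T_+^HT_+$ is well placed and correctly justified.
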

\begin{proof} To simplify notation set $M=S(W)$ and $N=S(V)$. 
We get from Proposition~\ref{prop:complement} that
\begin{align*}
M_+^G M \cap N &\subseteq 
\eta^{-1}(\eta(M_+^G M \cap N)) \subseteq \eta^{-1}(\eta(M_+^G M) \cap \eta (N))) \\
& = \eta^{-1}(R_+^GR \cap T) \subseteq \eta^{-1}(T_+^HT_+) 
= N_+^H N_++\ker(\eta). 
\end{align*}
Since $N\cap \ker(\eta)=(0)$, we conclude that 
\[M_+^GM\cap N\subseteq N_+^HN_+. \qedhere \]
\end{proof}

\bigskip
\begin{proofof}{Theorem~\ref{thm:betab}}
We have $M_+^GM\cap N^H\subseteq N_+^HN_+$ as an immediate consequence of Corollary~\ref{cor:felemelt}, whence applying the $N^H$-module homomorphism $\tau^H$ we conclude $M_+^GM\cap N^H\subseteq (N_+^H)^2$. 
Denote by $\kappa$ the canonical surjection $\kappa: S(W)\to S(W)_G=M/M_+^GM$. 
The kernel of the restriction of $\kappa$ to $N^H$ is 
\[\ker(\kappa\vert_{N^H})=N^H\cap M_+^GM\subseteq (N_+^H)^2.\]  
It follows that the natural surjection $\nu:N^H\to N^H/(N_+^H)^2$ factors through 
$\kappa\vert_{N^H}$; that is, there exists a graded $\F$-algebra homomorphism 
$\gamma:\kappa(N^H)\to  N^H/(N_+^H)^2$ such that 
$\nu=\gamma\circ \kappa\vert_{N^H}$. 
In particular, the algebra $N^H/(N_+^H)^2$ is a homomorphic image of the subalgebra 
$\kappa(N^H)$ of the coinvariant algebra $S(W)_G$. 
Consequently, we have the inequalities 
\[b(G,W)=\td(S(W)_G)\ge \td(\kappa(N^H))\ge \td(N^H/(N_+^H)^2)=\beta(H,V)\]
that show \eqref{eq:1thm}. 
Applying \eqref{eq:1thm} to an $\F H$-module $V$ for which $\beta(H)=\beta(H,V)$ we obtain \eqref{eq:2thm}, which together with Lemma~\ref{lemma:beta=b+1} in turn imply 
\eqref{eq:3thm}. 
\end{proofof}

\begin{remark} Combining \eqref{eq:1thm} with \eqref{eq:b} we  have in fact the inequality 
\begin{align}b(G, \Ind_H^G V ) \ge \max \{ \beta(H,V), b(H,V) \}.  
\end{align}
\end{remark}


\section{Normal subgroups} \label{sec:lower}
Let $N$ be a normal subgroup of $G$. 
Given  an $\F (G/N)$-module $U$ and an $\F N$-module $V$ let us consider the $\F G$-module 
\begin{align}\label{eq:rep}W := U \oplus \Ind_N^G V\end{align}
where we view $U$ as an $\F G$-module on which $N$ acts trivially. 
The relative Reynolds operator is 
defined as 
\[\tau_N^G: S(W)^N \to S(W)^G, \quad \tau_N^G(m) = \frac{1}{|G:N|}\sum_{g \in \mathcal{C}}  m^g\] 
where $\mathcal{C}$ is a system of $N$-coset 
representatives in $G$,  
$m \in S(W)^N$, and we write $m^g$ for $g^{-1}\cdot m$. The map $\tau_N^G$ is an $S(W)^G$-module homomorphism and is 
surjective onto $S(W)^G$. Moreover, we have $\tau_N^G\circ\tau^N=\tau^G$.  
Note that the direct sum decomposition $W=U\oplus\bigoplus_{g\in\mathcal{C}}g\cdot V$ induces an identification 
\[S(W)=S(U)\otimes \bigotimes_{g\in\mathcal{C}}g\cdot V,\] 
and $S(U)$, $S(V)$, $S(g\cdot V)$, will be considered as subalgebras of $S(W)$ in the obvious way.  
Let 
\[\pi : S(W) \to S(U)\otimes S(V)=S(U \oplus V)\] 
be the $N$-equivariant $\F$-algebra epimorphism  of graded algebras whose kernel is the ideal generated by $\sum_{g\in \mathcal{C}\setminus N}g\cdot V$, and $\pi$ is the identity map on the subalgebra $S(U\oplus V)$ of $S(W)$.

\begin{lemma}\label{lemma:image}
The image of the Hilbert ideal $S(W)_+^GS(W)$ under $\pi$ is generated as an ideal in 
$S(U\oplus V)$ by $S(U)_+^{G/N}$ and $S(V)_+^N$. 
\end{lemma}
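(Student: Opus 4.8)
The plan is to use that $\pi$ is a surjective homomorphism of graded $\F$-algebras, so that
$\pi(S(W)_+^GS(W))=\pi(S(W)_+^G)\cdot S(U\oplus V)$ is exactly the ideal of $S(U\oplus V)$
generated by $\pi(S(W)_+^G)$. Writing $\mathcal I$ for the ideal of $S(U\oplus V)$ generated by
$S(U)_+^{G/N}$ and $S(V)_+^N$, the whole statement then reduces to comparing $\pi(S(W)_+^G)$ with
$\mathcal I$ at the level of generated ideals, and I would prove the two resulting inclusions separately.

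For $\mathcal I\subseteq\pi(S(W)_+^GS(W))$ I would check that both generating sets already lie in
$\pi(S(W)_+^G)$. Since $N$ acts trivially on $U$ we have $S(U)^{G/N}=S(U)^G\subseteq S(W)^G$, and as
$\pi$ restricts to the identity on $S(U)$ this gives $S(U)_+^{G/N}\subseteq\pi(S(W)_+^G)$ immediately.
For $S(V)_+^N$ I would apply the Reynolds operator directly: for $h\in S(V)_+^N$ the element
$\tau^G(h)$ lies in $S(W)_+^G$, and since each summand $x\cdot h$ of
$\tau^G(h)=\frac{1}{|G|}\sum_{x\in G}x\cdot h$ lands in the block $S(g'\cdot V)$ with $g'\in\mathcal C$
representing the coset $xN$, only the terms with $x\in N$ survive $\pi$; this yields
$\pi(\tau^G(h))=\frac{1}{|G:N|}\tau^N(h)=\frac{1}{|G:N|}h$, so that $S(V)_+^N\subseteq\pi(S(W)_+^G)$ as
well. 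As $\pi(S(W)_+^GS(W))$ is an ideal, the inclusion $\mathcal I\subseteq\pi(S(W)_+^GS(W))$ follows.

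The substantive direction is $\pi(S(W)_+^GS(W))\subseteq\mathcal I$, for which it suffices to show
$\pi(S(W)_+^G)\subseteq\mathcal I$. Because $\tau^G$ maps $S(W)_+$ onto $S(W)_+^G$ and is linear and
degree preserving, $S(W)_+^G$ is spanned by the elements $\tau^G(\mu)$ with $\mu$ a monomial of
positive degree in the variables of $W=U\oplus\bigoplus_{g\in\mathcal C}g\cdot V$. So I would reduce to
proving $\pi(\tau^G(\mu))\in\mathcal I$ for each such $\mu$. Writing
$\mu=\mu_U\prod_{g\in\mathcal C}\mu_g$ with $\mu_U\in S(U)$ and $\mu_g\in S(g\cdot V)$, I would expand
$\pi(\tau^G(\mu))=\frac{1}{|G|}\sum_{x\in G}\pi(x\cdot\mu)$ and track which summands survive: the factor
$x\cdot\mu_g$ lies in the block $S(g'\cdot V)$ with $g'$ representing $xgN$, and $\pi$ annihilates it
whenever $\mu_g$ has positive degree and $g'\neq 1$.

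The heart of the argument, and the step I expect to be the main obstacle, is the ensuing $N$-coset
bookkeeping deciding which terms $\pi$ retains. If $\mu$ involves $V$-variables from two distinct blocks
$g_1\cdot V$ and $g_2\cdot V$, then a surviving summand would require $x\in Ng_1^{-1}\cap Ng_2^{-1}$, an
empty intersection since $g_1N\neq g_2N$; hence $\pi(\tau^G(\mu))=0\in\mathcal I$. If $\mu=\mu_U$ is a
pure $U$-monomial, then $\pi(\tau^G(\mu))=\tau^G(\mu_U)\in S(U)_+^{G/N}$. In the remaining case
$\mu=\mu_U\cdot(g_0\cdot\mu_V)$ with $\mu_V\in S(V)$ of positive degree, only the coset $x\in Ng_0^{-1}$
contributes; substituting $x=ng_0^{-1}$ and using that $N$ acts trivially on $S(U)$ collapses the sum to
\[\pi(\tau^G(\mu))=\frac{1}{|G:N|}\,(g_0^{-1}\cdot\mu_U)\,\tau^N(\mu_V),\]
which is a multiple of an element of $S(V)_+^N$ and hence lies in $\mathcal I$. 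Combining the two
inclusions gives the asserted description of $\pi(S(W)_+^GS(W))$. Throughout, the delicate points are
the coset bookkeeping governing which terms $\pi$ keeps and the repeated use of the triviality of the
$N$-action on $U$.
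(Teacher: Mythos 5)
Your proof is correct and follows essentially the same route as the paper: both reduce to computing $\pi(\tau^G(w))$ on spanning elements $w=u\cdot(\text{product of }V\text{-type variables})$, and your three-case analysis (distinct blocks give $0$, a pure $U$-monomial gives $\tau^{G/N}(\mu_U)$, a single block gives $\frac{1}{|G:N|}(g_0^{-1}\cdot\mu_U)\tau^N(\mu_V)$) matches the paper's formula \eqref{eq:lenyeg} exactly, as does your reverse inclusion via $\pi(\tau^G(h))=\frac{1}{|G:N|}h$ for $h\in S(V)_+^N$. The only cosmetic difference is that the paper organizes the coset bookkeeping through the factorization $\tau^G=\tau_N^G\circ\tau^N$ and the relative Reynolds operator, whereas you expand $\tau^G$ over all of $G$ by hand.
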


\begin{proof}
As an $\F$-vector space $S(W)^G_+$ is spanned by elements of the form 
$\tau^G(w)$ where $w$ ranges over any $\F$-vector space basis of  $S(W)_+$.  
Now $S(W)_+$ is spanned by elements of the form  
$w=uv$ where $u \in S(U)$ is homogeneous and 
$v=v_1^{g_1}\cdots v_r^{g_r}$, where $r\in \mathbb{N}_0$, $g_i\in G$, $v_i\in V\subset S(W)_1$, and $\deg(u)>0$ or $r=\deg(v)>0$.  
Assume that $w$ is of this form.  
Then 
\[\tau^G(w) = \tau_N^G(\tau^N (w))= \tau_N^G(u \tau^N(v)),\] 
and hence we have 
\[ \pi(\tau^G(w)) = \frac{1}{|G:N|}\sum_{g \in \mathcal{C}} \pi( u^g \tau^N(v) ^g) = 
\frac{1}{|G:N|}\sum_{g \in \mathcal{C}} u^g \pi(\tau^N(v) ^g). \]
Now $\pi(\tau^N(v) ^g)= \pi(\tau^N(v^g))\neq 0$ if and only if $v^g \in S(V)\subset S(W)$. 
As a result we get
\begin{align}\label{eq:lenyeg}
\pi(\tau^G(w)) =
\begin{cases}0 & \text{ if }v^g\notin S(V)\text{ for all }g\in\mathcal{C} \\
\frac{1}{|G:N|} u^g \tau^N(v^g) & \text{ if } v^g\in S(V)_+ \text{ for some }g\in\mathcal{C}\\
\tau^{G/N}(u) & \text{ if } v = 1
\end{cases}.
\end{align}
The elements on the right hand side of \eqref{eq:lenyeg} all belong to the ideal 
$I$ generated by $S(U)_+^{G/N}$ and $S(V)_+^N$, implying  that $\pi(S(W)_+^G)S(W)\subseteq I$. For the reverse inclusion note that $\pi(S(U)_+^{G/N})=S(U)_+^{G/N}\subset S(W)_+^G$, and for any $v\in S(V)_+^N$ we have $v=\pi(\sum_{g\in \mathcal{C}}v^g)\in 
\pi(S(W)_+^G)$. 
\end{proof}

\bigskip 
\begin{proofof}{Theorem~\ref{thm:G/N}} 
Consider the natural surjection $\rho:S(U)\otimes S(V)\to S(U)_{G/N}\otimes S(V)_N$. 
The kernel of $\rho$ is the ideal generated by $S(U)_+^{G/N}$ and $S(V)_+^N$, whence by Lemma~\ref{lemma:image} we have $\ker(\rho)=\pi(S(W)_+^GS(W))$. 
It follows that the Hilbert ideal $S(W)_+^GS(W)$ is contained in $\ker(\rho\circ \pi)$, hence 
$\rho\circ\pi$ factors through the natural surjection $S(W)\to S(W)_G$. 
Consequently there exists a degree preserving $\F$-algebra surjection 
$S(W)_G\to S(U)_{G/N}\otimes S(V)_N$. This obviously implies that 
\begin{align*}
\td(S(W)_G)\ge \td(S(U)_{G/N}\otimes S(V)_N)
\\=\td(S(U)_{G/N})+\td(S(V)_N),\end{align*} 
which is the desired inequality \eqref{eq:b+b}. 

The inequality \eqref{eq:beta+beta} follows from \eqref{eq:b+b} by 
Lemma~\ref{lemma:beta=b+1}. 
\end{proofof}

\begin{remark} Theorem~\ref{thm:G/N} in the special case when $G/N$ is abelian 
was proved in \cite[Theorem 4.3]{CzD:2}, and in the special case when $G$ is a 
direct product $N\times N_1$ it was proved in \cite[Theorem 3.4]{CzD:3}. 
\end{remark}


\section{The $k$th Noether number}\label{sec:k-th_noether} 

Given a $\F G$-module $W$ and a positive integer $k$ we set 
\[\beta_k(G,W)=\td(S(W)^G/((S(W)_+^G)^{k+1})\] 
and call 
\[\beta_k(G)=\sup\{\beta_k(G,W)\mid W\text{ is an }\F G\text{-module}\}\] 
the {\it $k$th Noether number}. In the special case $k=1$ we recover the Noether number. 
The study of this quantity began in \cite{CzD:1}, see \cite{CzDG} for a survey. 
Moreover, set 
\[b_k(G,W)=\td(S(W)/(S(W)_+^G)^kS(W))\] 
and 
\[b_k(G)=\sup\{b_k(G,W) \mid W\text{ is an }\F G\text{-module}\}.\] 
Again in the special case $k=1$ we recover $b(G,W)$ and $b_k(G)$. 
It was shown in \cite{CzD:3} that 
\[\beta_k(G,W)\le b_k(G,W)+1\ \text{ and }\ \beta_k(G)=b_k(G)+1.\] 

\begin{theorem} \label{thm:betab_k}
Let $H\subsetneq G$ be a proper subgroup of $G$ and let $V$ be an $\F H$-module. 
Then the inequality
\begin{align*}
b_k(G, \Ind_H^G V ) \ge \beta_k(H,V)
\end{align*}
holds. 
In particular, we have the  inequality 
\begin{align*}
b_k(G)\ge \beta_k(H)
\end{align*} 
and the strict inequality 
\begin{align*}
\beta_k(G)>\beta_k(H).
\end{align*} 
 \end{theorem}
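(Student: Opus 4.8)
The plan is to adapt the proof of Theorem~\ref{thm:betab} to the $k$th Noether number setting by replacing the Hilbert ideal $S(W)_+^GS(W)$ with its $k$th power $(S(W)_+^G)^kS(W)$ throughout. The basic geometric set-up is unchanged: I would take the same system $\mathcal{C}$ of left $H$-coset representatives with $1\in\mathcal{C}$, the same module $W=\mathrm{Ind}_H^G V=\bigoplus_{g\in\mathcal{C}}g\cdot V$, the same projection $\pi:S(W)\to S(V)$, and the same quadratic ideal $J$ yielding the factor algebra $R=S(W)/J=\F\oplus\bigoplus_{g\in\mathcal{C}}I^{(g)}$ with the distinguished subalgebra $T=\F\oplus I^{(1_G)}\cong S(V)$ via the isomorphism $\eta\vert_{S(V)}$. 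The key invariant-theoretic fact \eqref{eq:schmid0}, that $\pi(S(W)^G)=S(V)^H$, remains the foundation and is unaffected.

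The heart of the argument is a $k$th-power analogue of Proposition~\ref{prop:complement}, namely the inclusion $(R_+^G)^k R\cap T\subseteq (T_+^H)^k T_+$, or something close to it. To prove this I would write an arbitrary element of $(R_+^G)^k R$ as a sum of terms, each a product of $k$ invariants $\tau^G(\cdot)$ times an element of $R$. Using the same two reductions as before, that every element of $R_+$ has the form $\sum_{g\in\mathcal{C}}g\cdot t_g$ with $t_g\in T_+$ and that $\tau^G(g\cdot t)=\tau^G(t)$, together with the crucial identity \eqref{eq:16} relating $\tau^G$ to $\tau^H$ on the subalgebra $T$, I would bring each term into a normal form analogous to \eqref{eloall} but now carrying a product of $k$ relative-Reynolds factors. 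The annihilation relations among the ideals $I^{(g)}$ force each such term to live in a single summand $g\cdot T$, and imposing $r\in T$ lets me isolate the $g=1$ part and, by choosing some $g\in\mathcal{C}\setminus H$ (available precisely because $H$ is proper), cancel the stray summands to deposit everything into $(T_+^H)^kT_+$.

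From this proposition, the passage back to $S(W)$ via $\eta$ and the intersection argument of Corollary~\ref{cor:felemelt} go through essentially verbatim, giving $(S(W)_+^G)^kS(W)\cap S(V)\subseteq (S(V)_+^H)^kS(V)_+$; applying $\tau^H$ and using that $\tau^H$ is an $S(V)^H$-module map then yields $(S(W)_+^G)^kS(W)\cap S(V)^H\subseteq (S(V)_+^H)^{k+1}$. As in the proof of Theorem~\ref{thm:betab}, this shows that the canonical surjection $S(W)\to S(W)/(S(W)_+^G)^kS(W)$ restricted to $S(V)^H$ has kernel inside $(S(V)_+^H)^{k+1}$, so $S(V)^H/(S(V)_+^H)^{k+1}$ is a homomorphic image of a subalgebra of the relevant factor algebra, whence $b_k(G,W)\ge \td\bigl(S(V)^H/(S(V)_+^H)^{k+1}\bigr)=\beta_k(H,V)$. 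Taking $V$ with $\beta_k(H)=\beta_k(H,V)$ gives $b_k(G)\ge\beta_k(H)$, and combining with $\beta_k(G)=b_k(G)+1$ gives the strict inequality $\beta_k(G)>\beta_k(H)$.

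I expect the main obstacle to be the bookkeeping in the $k$th-power version of Proposition~\ref{prop:complement}: with $k$ Reynolds factors multiplied together, one must verify that the repeated use of \eqref{eq:16} and the $I^{(g)}$-annihilation relations genuinely confines a product of $k$ invariant factors into a single coset summand and that the surviving terms assemble into $(T_+^H)^kT_+$ rather than merely into some lower power. Checking that the degree count in the final \td\ inequality matches $\beta_k$ (which tracks the $(k+1)$st power of the augmentation ideal) rather than $\beta_j$ for some other $j$ is the delicate point, but it is forced by the exponent shift that $\tau^H$ introduces, exactly as in the $k=1$ case.
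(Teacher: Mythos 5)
Your proposal is correct and follows essentially the same route as the paper: the same quadratic ideal $J$ and algebra $R=\F\oplus\bigoplus_{g\in\mathcal{C}}I^{(g)}$, the same key inclusion $(R_+^G)^kR\cap T\subseteq (T_+^H)^kT_+$ proved by expanding elements with $k$ Reynolds factors as in \eqref{eloall} and using \eqref{eq:16} together with a coset representative $g\in\mathcal{C}\setminus H$, and the same descent via $\eta$, $\tau^H$, and the surjection $\kappa(N^H)\to N^H/(N_+^H)^{k+1}$ to conclude $b_k(G,W)\ge\beta_k(H,V)$. The bookkeeping point you flag resolves exactly as you predict, since each mixed term $(g\cdot u)\tau^G(v^{(1)})\cdots\tau^G(v^{(k)})$ collapses into the single summand $g\cdot T$ while the pure Reynolds products spread uniformly over all cosets, which is what makes the cancellation at $g\notin H$ deposit everything into $(T_+^H)^kT_+$.
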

\begin{proof}
We use the notation of Section~\ref{sec:subgroup}. First we claim that $(R_+^G)^kR\cap T\subseteq (T_+^H)^kT_+$. Similarly to the proof of Proposition~\ref{prop:complement}, 
any $r\in (R_+^G)^kR$ can be written as 
\begin{align}\label{eq:sumi}
r=\sum_{i\in\Lambda}(g_i\cdot u_i)\tau^G(v_i^{(1)})\dots\tau^G(v_i^{(k)})+
\sum_{j\in\Gamma} \tau^G(w_j^{(1)})\dots\tau^G(w_j^{(k)})
\end{align}  
where $g_i\in\mathcal{C}$, $u_i,v_i^{(l)},w_j^{(l)}\in T_+$. 
Take an element  $g  \in \mathcal C \setminus H$. 
It follows from \eqref{eq:sumi} and \eqref{eq:16} that  if $r\in T$, then 
\[0 = g \cdot \sum_{i\in\Lambda: g_i =g}u_i\tau^H(v_i^{(1)})\dots\tau^H(v_i^{(k)})  + 
g \cdot \sum_{j\in\Gamma} \tau^H(w_j^{(1)})\dots \tau^H(w_j^{(k)}) , 
\] 
implying that 
\[\sum_{j\in\Gamma} \tau^H(w_j^{(1)})\dots \tau^H(w_j^{(k)})\in (T_+^H)^kT_+.\]  
Therefore if $r\in (R_+^G)^kR\cap T$ then 
\begin{align*}r=\frac{1}{|G:H|^k}\left(\sum_{i\in\Lambda: g_i =1_G}u_i\tau^H(v_i^{(1)})\dots\tau^H(v_i^{(k)}\right)  + 
 \sum_{j\in\Gamma} \tau^H(w_j^{(1)})\dots \tau^H(w_j^{(k)}))
 \\\in (T_+^H)^kT_+ . 
\end{align*} 
Similarly to Corollary~\ref{cor:felemelt} we conclude that 
$(M_+^G)^kM\cap N\subseteq (N_+^H)^kN_+$, which immediately implies 
(using the Reynolds operator $\tau^H$) that 
\begin{align}\label{eq:N+}
(M_+^G)^kM\cap N^H\subseteq (N_+^H)^{k+1}.
\end{align}  
Denote by $\kappa$ the natural surjection $\kappa:M\to M/(M_+^G)^kM$. The inclusion 
\eqref{eq:N+} implies that there exists a graded $\F$-algebra surjection from the subalgebra 
$\kappa(N^H)$ of $M/(M_+^G)^kM$ onto $N/(N_+^H)^{k+1}$. Thus we have 
\[\td(M/(M_+^G)^kM)\ge\td(\kappa(N^H))\ge \td(N/(N_+^H)^{k+1}),\] 
yielding the desired inequality 
$b_k(G,W)\ge\beta_k(H,V)$.  
\end{proof}

\begin{theorem}\label{thm:G/N_k} Let $N$ be a normal subgroup of $G$, 
$U$ an $\F (G/N)$-module and $V$ an $\F N$-module.  Then for any positive integers $r,s$  we have the inequality 
\begin{align*}
b_{r+s-1}(G, U\oplus \Ind_N^G V) \ge b_r(G/N,U) + b_s(N,V).
\end{align*} 
In particular, we have 
\begin{align*}
\beta_{r+s-1}(G) \ge \beta_r(G/N) + \beta_s(N) -1.
\end{align*}
\end{theorem}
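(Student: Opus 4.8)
The plan is to imitate the proof of Theorem~\ref{thm:G/N} almost verbatim, replacing the single quotient map by its $k$th analogue and inserting one combinatorial estimate to absorb the exponents. I would keep the notation of Section~\ref{sec:lower}: set $W = U \oplus \Ind_N^G V$, let $\pi: S(W) \to S(U) \otimes S(V) = S(U\oplus V)$ be the $N$-equivariant graded epimorphism introduced there, and write $I$ for the ideal of $S(U\oplus V)$ generated by $S(U)_+^{G/N}$ and $S(V)_+^N$. By Lemma~\ref{lemma:image} we already know $\pi\big(S(W)_+^G S(W)\big) = I$.

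First I would record the elementary identity $(S(W)_+^G)^{r+s-1} S(W) = \big(S(W)_+^G S(W)\big)^{r+s-1}$, which holds because $S(W)$ is commutative with identity (one uses commutativity to regroup products and the identity to insert trivial factors). Since $\pi$ is a surjective $\F$-algebra homomorphism, this gives $\pi\big((S(W)_+^G)^{r+s-1} S(W)\big) = I^{r+s-1}$. Next I would introduce the natural surjection
\[\rho: S(U) \otimes S(V) \to \big(S(U)/(S(U)_+^{G/N})^r S(U)\big) \otimes \big(S(V)/(S(V)_+^N)^s S(V)\big),\]
whose kernel $K$ is precisely the ideal generated by $(S(U)_+^{G/N})^r$ and $(S(V)_+^N)^s$.

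The crux — and the only place where the exact value $r+s-1$ of the exponent is used — is the inclusion $I^{r+s-1} \subseteq K$. To establish it, note that $I = \big((S(U)_+^{G/N}) S(U) \otimes S(V)\big) + \big(S(U) \otimes (S(V)_+^N) S(V)\big)$, so $I^{r+s-1}$ is spanned by products of $r+s-1$ generators, each taken from one of these two summands. In the tensor factorisation, a term using $p$ generators of the first kind and $q = r+s-1-p$ of the second lands in $(S(U)_+^{G/N})^p S(U) \otimes (S(V)_+^N)^q S(V)$. Because $p + q = r+s-1$, a pigeonhole argument forces $p \ge r$ or $q \ge s$, and in either case the term lies in $K$; hence $I^{r+s-1} \subseteq K$. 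I expect this pigeonhole step to be the main (and essentially the only nontrivial) obstacle, the rest being bookkeeping carried over from Theorem~\ref{thm:G/N}.

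Consequently $(S(W)_+^G)^{r+s-1} S(W) \subseteq \ker(\rho\circ\pi)$, so $\rho\circ\pi$ factors through the canonical surjection $S(W) \to S(W)/(S(W)_+^G)^{r+s-1} S(W)$ and yields a degree-preserving $\F$-algebra epimorphism onto the displayed tensor product. Taking $\td$ and using that the top degree of a tensor product of graded $\F$-algebras is the sum of the top degrees, I would obtain
\[b_{r+s-1}(G, W) \ge b_r(G/N, U) + b_s(N, V),\]
which is the first assertion. Finally, taking suprema over $U$ and $V$ (chosen to realise $b_r(G/N)$ and $b_s(N)$) gives $b_{r+s-1}(G) \ge b_r(G/N) + b_s(N)$, and substituting the identities $\beta_k = b_k + 1$ recorded just before the statement converts this into $\beta_{r+s-1}(G) \ge \beta_r(G/N) + \beta_s(N) - 1$.
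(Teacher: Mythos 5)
Your proposal is correct and takes essentially the same route as the paper's own proof: both pass through Lemma~\ref{lemma:image}, the key inclusion $(I,J)^{r+s-1}\subseteq (I^r,J^s)$ (which the paper asserts without comment and you justify by the pigeonhole count $p+q=r+s-1 \Rightarrow p\ge r$ or $q\ge s$), and the resulting degree-preserving surjection of $S(W)/K^{r+s-1}$ onto the tensor product of the two truncated algebras, followed by additivity of $\td$ under tensor products. Your extra remarks (the identity $(S(W)_+^G)^{k}S(W)=(S(W)_+^GS(W))^{k}$ and the choice of $U,V$ realising the suprema) merely make explicit bookkeeping that the paper leaves implicit.
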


\begin{proof}
Set $I=S(U)_+^{G/N}S(U)\triangleleft S(U)$, 
$J=S(V)_+^NS(V)\triangleleft S(V)$, and 
$K=S(W)_+^GS(W)\triangleleft S(W)$. 
With the notation of Section~\ref{sec:lower} we have that 
$\pi(K)=(I,J)\triangleleft S(U)\otimes S(V)=S(U\oplus V)$ 
by Lemma~\ref{lemma:image}. 
Hence denoting by 
\[\rho:S(U)\otimes S(V)\to S(U)/I^r\otimes S(V)/J^s\] 
the natural surjections, we have 
\[\pi(K^{r+s-1})=(I,J)^{r+s-1}\subseteq (I^r,J^s)=\ker\rho.\] 
It follows that there exists a degree preserving $\F$-algebra surjection 
\[S(W)/K^{r+s-1}\to S(U)/I^r\otimes S(V)/J^s,\] 
implying that 
\begin{align*}
b_{r+s-1}(G,W)=\td(S(W)/K^{r+s-1})\ge\td(S(U)/I^r\otimes S(V)/J^s)
\\=b_r(G/N,U)+b_s(N),V). 
\end{align*}
\end{proof}


\end{document}